\title{Equipartition of energy in geometric scattering theory}
\author{Dean Baskin}
\thanks{The author is grateful to Maciej Zworski for pointing out the
  connection between the radiation field and equipartition of energy
  in the one-dimensional setting.  This research was supported by NSF
  postdoctoral fellowship DMS-1103436.}
\address{Northwestern University}
\date{April 1, 2013}
\newtheorem{thm}{Theorem}
\newtheorem{prop}[thm]{Proposition}
\theoremstyle{definition}
\newtheorem{defn}{Definition}
\theoremstyle{remark}
\newtheorem{remark}{Remark}
\newcommand{\norm}[2][]{\left\| #2\right\| _{#1}}
\newcommand{\pd}[1][]{\partial_{#1}}
\newcommand{\PD}[1][]{D_{#1}}
\newcommand{\espace}{H_{E}}
\newcommand{\ac}{\operatorname{ac}}
\newcommand{\pp}{\operatorname{pp}}
\newcommand{\grad}{\nabla}
\newcommand{\lap}{\Delta}
\newcommand{\reals}{\mathbb{R}}
\newcommand{\sphere}{\mathbb{S}}
\newcommand{\differential}[1]{\,d#1}
\newcommand{\dg}{\differential{g}}
\renewcommand{\dh}{\differential{h}}
\newcommand{\dr}{\differential{r}}
\newcommand{\ds}{\differential{s}}
\newcommand{\dsigma}{\differential{\sigma}}
\newcommand{\dt}{\differential{t}}
\newcommand{\dx}{\differential{x}}
\newcommand{\dy}{\differential{y}}
\newcommand{\dz}{\differential{z}}
\newcommand{\domega}{\differential{\omega}}
\begin{document}

\maketitle

\begin{abstract}
  In this note, we use an elementary argument to show that the
  existence and unitarity of radiation fields implies asymptotic
  partition of energy for the corresponding wave equation.  This
  argument establishes the equipartition of energy for the wave
  equation on scattering manifolds, asymptotically hyperbolic
  manifolds, asymptotically complex hyperbolic manifolds, and the
  Schwarzschild spacetime. It also establishes equipartition of energy
  for the energy-critical semilinear wave equation on $\reals^{3}$.
\end{abstract}

\section{Introduction}
\label{sec:introduction}

In this note, we use the radiation fields of Friedlander
\cite{Friedlander:1980}, S{\'a} Barreto
\cite{Sa-Barreto:2005,Sa-Barreto:2005}, and Guillarmou and S{\'a}
Barreto \cite{Guillarmou-SB:2008} to demonstrate the equipartition of
energy in the context of geometric scattering theory.  Our method also
provides an alternate proof for equipartition of energy for the
energy-critical semilinear wave equation on $\reals^{3}$ and a proof
of the equipartition of energy on the Schwarzschild exterior.

Asymptotic equipartition of energy for the wave equation on
$\reals^{n}$ was first observed by Brodsky~\cite{Brodsky1967}, whose
proof relied on the Fourier transform.  Using a Paley--Wiener theorem,
Duffin~\cite{Duffin1970} showed equipartition of energy after a finite
time for the wave equation on $\reals^{3}$.  The intervening decades
have seen many proofs of equipartition of energy for equations of
mathematical physics in many contexts.  As there are too many to
provide a comprehensive list, we list a few examples: Dassios and
Grillakis~\cite{Dassios:1983} showed asymptotic equipartition for the
wave equation on an exterior domain, Vega and
Visciglia~\cite{VegaVisciglia2008} proved an integrated version of the
statement for a class of critical semilinear wave equations on
$\reals^{n}$, and Gang and Weinstein~\cite{GangWeinstein2011} proved
an equipartition of mass theorem for nonlinear Schr{\"o}dinger and
Gross--Pitaevskii equations (though the methods discussed in this note
do not apply in the Schr{\"o}dinger setting).
 
In the model setting, we consider an initial value problem of the form
\begin{align}
  \label{eq:IVP}
  \left( \PD[t]^{2} - H \right) u &= 0 \\
  (u(0,z), \pd[t]u (0,z)) &= (u_{0}(z), u_{1}(z)). \notag
\end{align}
Here $\PD = \frac{1}{i}\pd$ and $H$ is a time-independent Hamiltonian
so that the initial value problem~\eqref{eq:IVP} has a conserved
energy $E$.  We suppose that the energy splits (in a time-dependent
manner) into kinetic and potential energies $E_{K}(t)$ and $E_{P}(t)$,
and that this decomposition extends to the level of energy densities
$e_{K}$ and $e_{P}$.  For the wave equation on $\reals^{n}$, these
energy densities are given by $e_{K}= \frac{1}{2}|\pd[t]u|^{2}$ and
$e_{P} = \frac{1}{2}|\grad u |^{2}$ and are given explicitly below in
other contexts.

In a more general Lorentzian setting, this splitting (and, indeed,
conservation of energy) does not occur.  In the example below of the
Schwarzschild spacetime from general relativity, the splitting is
possible because the spacetime is \emph{static}, i.e., $\pd[t]$ is a
Killing vector field orthogonal to the Cauchy hypersurface.

We consider the following geometric contexts (in settings (1)--(3)
below, $n=\dim X$): 
\begin{enumerate}
\item Scattering manifolds $(X,g)$ (in the sense of
  Melrose~\cite{Melrose:1994}), with $H = \lap_{X}$, the Laplacian with
  positive spectrum, with
  \begin{equation*}
    e_{K} = \left| \pd[t]u\right|^{2}, \quad e_{P} = \left| \grad_{g}u\right|_{g}^{2},
  \end{equation*}
\item Asymptotically hyperbolic manifolds $(X,g)$ (in the sense of
  Mazzeo--Melrose \cite{Mazzeo:1987}), with $H = \lap_{X} -
  \frac{(n-1)^{2}}{4}$ restricted to the orthocomplement of the
  eigenfunctions in the pure point spectrum, with
  \begin{equation*}
    e_{K} = \left|\pd[t]u\right|^{2}, \quad  e_{P} = \left| \grad_{g}u \right|^{2} -
    \frac{(n-1)^{2}}{4}|u|^{2},
  \end{equation*}
\item asymptotically complex hyperbolic manifolds $(X,g)$ (in the
  sense of Epstein--Melrose--Mendoza~\cite{Epstein:1991}), with $H =
  \lap_{X} - \frac{n^{2}}{4}$ and again restricted to the
  orthocomplement of the eigenfunctions in the pure point spectrum,
  with
  \begin{equation*}
    e_{K}= \frac{1}{2}\left| \pd[t]u\right|^{2}, \quad e_{P} =
    \frac{1}{2}\left| \grad_{g}u\right|^{2} - \frac{n^{2}}{4}\left| u \right|^{2},
  \end{equation*}
\item the Schwarzschild spacetime (here the Killing vector field
  $\pd[t]$ provides the splitting as well as conservation of energy),
  with
  \begin{equation*}
    e_{K} = \frac{1}{2}\left( 1 - \frac{2M}{r}\right)^{-1}\left|
      \pd[t]u\right|^{2} , \quad e_{P} = \frac{1}{2}\left( 1 -
      \frac{2M}{r}\right)\left| \pd[r]u\right|^{2} + 
    \frac{1}{2r}^{2}\left| \grad_{\omega}u\right|^{2},
  \end{equation*}
\item and the critical semilinear wave equation on $\reals^{3}$, with $H$
  a nonlinear Hamiltonian, and
  \begin{equation*}
    e_{K} = \frac{1}{2}\left| \pd[t] u \right|^{2} , \quad e_{P} =
    \frac{1}{2}\left| \grad u \right|^{2} + \frac{1}{6}\left| u \right|^{6}
  \end{equation*}

\end{enumerate}

The main result of this note is the following theorem:
\begin{thm}
  \label{thm:mainthm}
  If $u$ is a finite energy solution of the initial value
  problem~\eqref{eq:IVP} in one of the contexts above, then $u$
  exhibits asymptotic equipartition of energy, i.e.,
  \begin{equation*}
    \lim_{t\to \infty} E_{K}(t) - E_{P}(t) = 0.
  \end{equation*}
  Here $E_{K}(t)$ and $E_{P}(t)$ are given by integrals of the kinetic
  and potential energy densities, respectively.
\end{thm}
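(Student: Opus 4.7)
The strategy is to reduce equipartition to the vanishing, as $t \to \infty$, of a single matrix element of the half-wave group $e^{-2it\sqrt{H}}$, and then to use the unitarity of the radiation field to realize this matrix element as a translate of an $L^{2}$ function paired with a fixed one.

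The first step is algebraic. Working in contexts (1)--(4), I restrict to the orthogonal complement of the pure point spectrum so that $H$ is a nonnegative self-adjoint operator and $\sqrt{H}$ is defined by the functional calculus. Introducing the positive- and negative-frequency parts $f_{\pm}(t) = (\pd[t] \mp i\sqrt{H})u(t)$, the wave equation gives $f_{\pm}(t) = e^{\mp it\sqrt{H}} f_{\pm}(0)$, together with the recovery formulae $\pd[t] u = \tfrac{1}{2}(f_{+}+f_{-})$ and $\sqrt{H}\, u = \tfrac{1}{2i}(f_{-}-f_{+})$. Expanding $\|\pd[t]u\|^{2}$ and $\|\sqrt{H}\,u\|^{2}$ in terms of $f_{\pm}$, the diagonal contributions cancel in the difference, yielding (up to the normalizations built into each $e_{K}, e_{P}$)
\[
  2\bigl(E_{K}(t) - E_{P}(t)\bigr) \;=\; \operatorname{Re}\left\langle e^{-2it\sqrt{H}} f_{+}(0),\, f_{-}(0)\right\rangle.
\]
The ingredient needed here is that $E_{P}(t) = \tfrac{1}{2}\langle H u(t), u(t)\rangle$: in (1)--(3) this is a direct integration by parts against the Riemannian volume form, while in (4) it is a consequence of the static structure of Schwarzschild (i.e.\ that $\pd[t]$ is Killing and orthogonal to the Cauchy hypersurface).

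The second step is the key use of geometric scattering theory. The radiation field $\mathcal{R}$ constructed in each context by Friedlander, S\'a Barreto, and Guillarmou--S\'a Barreto is a unitary isomorphism from the appropriate absolutely continuous subspace of $\espace$ onto $L^{2}(\reals_{s} \times \partial X)$ that intertwines the half-wave group with translation, $\mathcal{R}\, e^{-it\sqrt{H}} = \tau_{t}\, \mathcal{R}$. Applying this intertwining to the formula above gives
\[
  2\bigl(E_{K}(t) - E_{P}(t)\bigr) \;=\; \operatorname{Re}\left\langle \tau_{2t}\, \mathcal{R} f_{+}(0),\, \mathcal{R} f_{-}(0)\right\rangle_{L^{2}(\reals \times \partial X)},
\]
and translates of an $L^{2}$ function converge weakly to zero (immediate for compactly supported functions, then by density), which proves the theorem in the linear settings. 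In the nonlinear context (5), one appeals to the scattering theorem of Bahouri--G\'erard and Kenig--Merle to write $u(t) = u_{L}(t) + o_{\espace}(1)$ with $u_{L}$ a free linear solution; the linear equipartition for $u_{L}$ then transfers to $u$ by continuity of the energy functionals, and the nonlinear $\tfrac{1}{6}|u|^{6}$ contribution tends to zero by scattering and Sobolev embedding.

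The main obstacle is collecting the correct radiation field and its unitarity on the AC subspace in each geometric context; once that input is in hand, the argument is elementary. A secondary difficulty is that in (2) and (3) the bare Laplacian is not nonnegative, so both the subtraction of $(n-1)^{2}/4$ or $n^{2}/4$ and the restriction to the orthocomplement of the eigenvalues are essential for $\sqrt{H}$ to exist and for the radiation field to be a unitary on the appropriate subspace.
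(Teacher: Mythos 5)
Your proof is correct in substance, but it takes a genuinely different route from the paper. The paper never forms $\sqrt{H}$ or the half-wave group: it works entirely at the level of energy \emph{densities}, cutting the energy integrals at a level set of the retarded time $s(t,x)$ and showing (via smoothness of the rescaled solution $v$ up to null infinity and dominated convergence) that the truncated kinetic and potential energies \emph{each} converge to $\tfrac12\int_{-\infty}^{\lambda}|F|^{2}\,ds$, where $F$ is built from the radiation field; unitarity then says no energy is left outside the truncation, and a $5\epsilon$ argument finishes. Your argument instead writes $2(E_{K}(t)-E_{P}(t))=\operatorname{Re}\langle e^{-2it\sqrt{H}}f_{+}(0),f_{-}(0)\rangle$ and kills this matrix element by the translation representation (equivalently, by the Riemann--Lebesgue lemma, since all you really need is that $f_{+}(0)$ lies in the absolutely continuous subspace of $\sqrt{H}$). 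What your approach buys is economy and a clean closed formula for the equipartition deficit; in the linear settings it arguably needs less than full unitarity of the radiation field. What the paper's approach buys is: (i) a localized statement ($E_{K}(\lambda,T)$ and $E_{P}(\lambda,T)$ separately converge), which yields non-quantitative local energy decay in forward light cones as a corollary (Remark~\ref{rem:local-energy-decay}) and upgrades to quantitative decay when the radiation field decays; and (ii) a treatment of the energy-critical semilinear equation that uses the nonlinear radiation field of \cite{BaskinBarreto2012} directly, rather than routing through the Bahouri--G\'erard linear scattering theorem as you do (both routes work, and both ultimately invoke Bahouri--Shatah to dispose of the $\tfrac16|u|^{6}$ term).

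Two points where your write-up glosses over details worth making explicit. First, the radiation field in these references is a map on Cauchy data $(\phi,\psi)\in\espace$ intertwining the wave \emph{group} with translation; to write $\mathcal{R}\,e^{-it\sqrt{H}}=\tau_{t}\,\mathcal{R}$ acting on the single function $f_{+}(0)$ you need the standard but not entirely free identification of $\espace$ (modulo the kernel) with $L^{2}_{\ac}\oplus L^{2}_{\ac}$ via $(\phi,\psi)\mapsto f_{\pm}(0)$, under which the forward radiation field becomes an outgoing translation representation of $e^{-it\sqrt{H}}$ on one summand. Second, the identity $E_{P}(t)=\tfrac12\langle Hu,u\rangle$ requires the integration by parts to produce no boundary contribution; this holds for finite-energy solutions by the form-domain characterization of $H$ (and, on Schwarzschild, requires using the weighted measure $(1-2M/r)^{-1}r^{2}\,dr\,d\omega$ so that $e_{K}$ is the plain $L^{2}$ density), but it should be said. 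Neither point is a genuine gap.
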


One feature that the contexts above share is the existence and
unitarity of radiation fields.  Radiation fields are restrictions of
(rescaled) solutions of wave equations to null infinity and have
interpretations both as Lax--Phillips translation representations of
wave equations and as generalizations of the Radon transform.
Heuristically, the statement that the radiation field is a unitary
operator is a statement that all energy radiates to null infinity,
i.e., a form of non-quantitative local energy decay.  

We start by considering the model setting of the wave equation on
$\reals^{n}$:
\begin{align*}
  \Box u &= 0 \\
  (u,\pd[t]u)|_{t=0} &= (\phi, \psi).
\end{align*}
Suppose for now that $\phi$ and $\psi$ are smooth and compactly
supported.  For $x\in (0,\infty)$, $\theta \in \sphere^{n-1}$, and
$s \in (-\infty, \infty)$, let us define a new function $v_{+}$ by
\begin{equation*}
  v_{+}(x,s,\theta) = x^{-\frac{n-1}{2}}u\left(s + \frac{1}{x} ,
    \frac{1}{x}\theta\right) .
\end{equation*}
(In other words, $x = |z|^{-1}$ and $s = t - |z|$.)  A relatively
straightforward calculation shows that $v_{+}$ is smooth past $x=0$
and so we can define the \emph{forward radiation field}
$\mathcal{R}_{+}$ by
\begin{equation*}
  \mathcal{R}_{+}(\phi, \psi) (s,\theta) = \pd[s] v_{+}(0,s,\theta).
\end{equation*}
Friedlander~\cite{Friedlander:1980} observed that this radiation field
is a translation representation of the wave group, i.e., it is a
unitary map $\dot{H}^{1}\times L^{2} \to L^{2}(\reals
\times\sphere^{n-1})$ that intertwines wave evolution with
translation.  Moreover, for the flat wave equation, the radiation
field can be written in terms of the Radon transform.  For $X =
\reals^{3}$, the relationship is given by 
\begin{equation*}
  \mathcal{R}_{+}(\phi, \psi) (s,\theta) = -\frac{1}{4\pi} \left(
    R\psi (s,\theta) + \pd[s]\left( s R\phi\right)(s,\theta)\right),
\end{equation*}
where the Radon transform $R$ is given by
\begin{equation*}
  Rf (s,\theta) = \int _{\langle z,\theta\rangle = s}f(z)\dsigma(z) .
\end{equation*}

The existence and unitary of the radiation field have been established
for many other settings in geometric scattering theory, including the
settings enumerated above.  They were first observed by
Friedlander~\cite{Friedlander:1980,Friedlander:2001} in Euclidean and
asymptotically Euclidean spaces.  S{\'a}
Barreto~\cite{Sa-Barreto:2005,Sa-Barreto:2003,Sa-Barreto:2008} proved
support theorems and unitarity for them on asymptotically hyperbolic
and asymptotically Euclidean manifolds, while Guillarmou and S{\'a}
Barreto~\cite{Guillarmou-SB:2008} extended these results to
asymptotically complex hyperbolic manifolds.  S{\'a} Barreto and
Wunsch~\cite{Sa-Barreto:2005a} showed that the radiation field is a
Fourier integral operator with canonical relation given by a ``sojourn
relation.''  In a nonlinear context, S{\'a} Barreto and the
author~\cite{BaskinBarreto2012} proved a support theorem for the
radiation field for the semilinear wave equation on $\reals^{3}$ and
showed it is norm-preserving, while Wang~\cite{Wang:2011} studied the
mapping properties of the radiation field for the Einstein vacuum
equations on perturbations of Minkowski space.  In recent work, Wang
and the author~\cite{Baskin-Wang} showed the existence and unitarity
of radiation fields on the Schwarzschild exterior.

In Section~\ref{sec:one-dimensional-wave} we discuss the motivating
case of the one-dimensional wave equation.  In
Section~\ref{sec:equipartition-energy} we prove a general proposition
implying the equipartition of energy.  In the remaining sections, we
summarize known results for the radiation field in various contexts
and check that they satisfy the conditions of
Section~\ref{sec:equipartition-energy}.

In what follows, $\dg$ denotes the volume form of the relevant metric
$g$.

\section{The one-dimensional wave equation}
\label{sec:one-dimensional-wave}

In this section we discuss the illuminating example of the radiation
field for the one-dimensional wave equation and discuss its connection
with equipartition of energy.

Consider now the one-dimensional wave equation:
\begin{align}
  \label{eq:1D-eqn}
  \left( \PD[t]^{2} - \PD[x]^{2}\right)u &= 0, \\
  (u, \pd[t]u) &= (\phi, \psi) \in C^{\infty}_{c}(\reals) \times
  C^{\infty}_{c}(\reals). \notag
\end{align}
The solution of equation~\eqref{eq:1D-eqn} is given in terms of left-
and right-moving waves:
\begin{align*}
  u(t,x) &= F(x+t) + G(x-t) \\
  F(s) &= \frac{1}{2} \phi(s) + \frac{1}{2} \int_{0}^{s}\psi(r)\dr +C\\
  G(s) &= \frac{1}{2} \phi(s) + \frac{1}{2} \int_{s}^{0}\psi(r)\dr -C 
\end{align*}
The zero-dimensional sphere $\sphere^{0}$ consists of two points,
which we identify as ``left'' ($-1$) and ``right'' ($+1$).  The
forward radiation field associated to the initial data $(\phi, \psi)$
is given by 
\begin{equation*}
  \mathcal{R}_{+}(\phi, \psi)(s, \theta) = \lim_{r\to \infty}\pd[s]u(s +
  r, r\theta) =
  \begin{cases}
    F'(s) & \theta = -1 \\
    G'(s) & \theta = + 1
  \end{cases}
\end{equation*}
In other words, the ``left'' component of the forward radiation field
is the left-moving wave form $F$, while the ``right'' component is the
right-moving wave form $G$.  In particular, we have that
\begin{equation*}
  \mathcal{R}_{+}(\phi, \psi)(s,\theta) =
  \begin{cases}
    \frac{1}{2}\phi'(s) + \frac{1}{2}\psi (s) & \theta = - 1 \\
    \frac{1}{2}\phi'(s) - \frac{1}{2}\psi (s) & \theta = + 1 
  \end{cases}
\end{equation*}
We calculate here the $L^{2}$ norm of $\mathcal{R}_{\pm}$ and its
relationship to the energy of $u$:
\begin{align*}
  \norm[L^{2}(\reals\times
  \sphere^{0})]{\mathcal{R}_{+}(\phi,\psi)}^{2} &=
  \int_{\reals}\left[ \frac{1}{4}\left( \phi'(s) + \psi(s)\right)^{2}
    + \frac{1}{4}\left( \phi'(s) - \psi (s)\right)^{2}\right] \\
  &= \frac{1}{2}\int_{\reals}\left(\left| \phi'(s)\right|^{2} + \left|
    \psi(s)\right|^{2} \right) \ds = E(\phi,\psi)
\end{align*}

It is well-known that the one-dimensional wave equation obeys
equipartition of energy for compactly supported smooth initial data.
Indeed, suppose that both $\phi$ and $\psi$ are smooth and supported
in the ball of radius $R$, so that $F(s)$ and $G(s)$ are both constant
for $|s| \geq R$.  In particular, $G'(s) = F'(s) = 0$ for
$|s|\geq R$.  We then compute
\begin{align*}
  &\frac{1}{2}\int_{\reals}\left( \left| \pd[t]u(t,x)\right|^{2} -
    \left| \pd[x]u(t,x)\right|^{2}\right)\dx  \\
&\quad\quad = \frac{1}{2}\int_{\reals} \left( \left| F'(x+t) - G'(x-t)\right|^{2}
    - \left| F'(x+t)+G'(x-t)\right|^{2}\right)\dx\\
  &\quad\quad= -2\int_{\reals}F'(x+t)G'(x-t)\dx
\end{align*}
For $t \geq R$ and $x\in\reals$, we have that either $|t+x|\geq R$ or
$|t-x|\geq R$ and therefore one of the two factors in the integral
vanishes, so $u$ obeys equipartition of energy.  

In fact, a consequence of the theorem in
Section~\ref{sec:equipartition-energy} is the asymptotic equipartition
of energy for the one-dimensional wave equation.  (This can also be seen
more directly by taking limits in $\dot{H}^{1}\times L^{2}$ above.)

\section{Equipartition of energy}
\label{sec:equipartition-energy}

Suppose now that $s$ is a smooth function of $t$ and $x$ and define
the kinetic and potential parts of the energy for fixed $t$:
\begin{align*}
  E_{K}(\lambda, t) &= \int_{s(t,x) \leq \lambda} e_{K}\dg,\\
  E_{P}(\lambda, t) &= \int _{s(t,x) \leq \lambda}e_{P}\dg.
\end{align*}
We note that the densities $E_{\bullet}(t)$ described in the
introduction are given by $E_{\bullet}(t) = E_{\bullet}(\infty, t)$,
for $\bullet = K,P$.

The results of this paper all follow from the following elementary
proposition.

\begin{prop}
  \label{prop:equipartition}
  Suppose that $u$ satisfies equation~\eqref{eq:IVP} on $\reals
  \times X$ with initial data $(\phi, \psi)$ and there is a function
  $F\in L^{2}(\reals)$ satisfying the following two
  conditions:
  \begin{equation}
    \label{eq:cond1}
    \lim_{T\to \infty} E_{K}(\lambda, T) = \lim _{T\to
      \infty}E_{P}(\lambda, T) = \frac{1}{2}\int
    _{-\infty}^{\lambda}\left| F\right|^{2} \ds ,
  \end{equation}
  for all $\lambda$, and
  \begin{equation}
    \label{eq:cond2}
    \int _{X} \left( e_{K} + e_{P} \right) \dg = \norm[L^{2}(\reals )]{F}^{2}.
  \end{equation}
  Then $u$ exhibits asymptotic equipartition of energy, i.e.,
  \begin{equation*}
    \lim _{T\to \infty} \left( E_{K}(T) - E_{P}(T)\right) = 0.
  \end{equation*}
\end{prop}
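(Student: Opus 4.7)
The plan is to combine conservation of energy with a truncation argument in the $s$-variable. Condition~\eqref{eq:cond1} forces the kinetic and potential energies to spread out along the level sets of $s$ with the same limiting density $\tfrac{1}{2}|F(s)|^{2}$, while \eqref{eq:cond2} prevents any energy from escaping invisibly past a given truncation level; together these should give asymptotic equipartition.

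Concretely, I would fix $\lambda\in\reals$ and write
\begin{equation*}
  E_{K}(T) - E_{P}(T) = \bigl(E_{K}(\lambda,T) - E_{P}(\lambda,T)\bigr) + \bigl(E_{K}(T) - E_{K}(\lambda,T)\bigr) - \bigl(E_{P}(T) - E_{P}(\lambda,T)\bigr).
\end{equation*}
For the first parenthesized term, I would apply \eqref{eq:cond1} directly: for fixed $\lambda$, both $E_{K}(\lambda,T)$ and $E_{P}(\lambda,T)$ converge to $\tfrac{1}{2}\int_{-\infty}^{\lambda}|F|^{2}\ds$ as $T\to\infty$, so their difference vanishes in the limit. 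For the remaining two terms I would instead \emph{add} them, because \eqref{eq:cond2} together with conservation of energy gives $E_{K}(T)+E_{P}(T)=\norm[L^{2}(\reals)]{F}^{2}$, and combining this identity with \eqref{eq:cond1} yields
\begin{equation*}
  \bigl(E_{K}(T) - E_{K}(\lambda,T)\bigr) + \bigl(E_{P}(T) - E_{P}(\lambda,T)\bigr) \longrightarrow \int_{\lambda}^{\infty}|F|^{2}\ds \qquad (T\to\infty).
\end{equation*}

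I would then finish by a standard $\epsilon$-argument: since $e_{K}$ and $e_{P}$ are non-negative in the geometric contexts of interest, each of the two tail terms above is non-negative and hence bounded by their sum, which is close to $\int_{\lambda}^{\infty}|F|^{2}\ds$ for $T$ large. Given $\epsilon>0$, first pick $\lambda$ so large that $\int_{\lambda}^{\infty}|F|^{2}\ds<\epsilon$, and then take $T$ large enough that the truncated difference and the tail error are each $<\epsilon$; this forces $|E_{K}(T)-E_{P}(T)|=O(\epsilon)$ for large $T$, which proves the proposition.

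The main obstacle I foresee is that in the asymptotically hyperbolic and asymptotically complex hyperbolic settings $e_{P}$ is not pointwise non-negative, so the step ``each summand is bounded by their sum'' must be adapted. I expect this to be handled by first restricting to the orthocomplement of the pure point spectrum, where the integrated potential energy is non-negative, and by phrasing the tail bound in terms of the sum $E_{K}+E_{P}$ rather than each piece separately; the structural identity above is insensitive to this modification.
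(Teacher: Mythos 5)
Your proposal is correct and follows essentially the same route as the paper's proof: split the energy at a level set $s=\lambda$, use condition~\eqref{eq:cond1} to kill the truncated difference, and use condition~\eqref{eq:cond2} together with conservation of energy to bound the tail $\int_{s>\lambda}(e_{K}+e_{P})\dg$ by $\int_{\lambda}^{\infty}|F|^{2}\ds$ plus a small error, then conclude by an $\epsilon$-argument. Your closing remark about the sign of $e_{P}$ in the (complex) hyperbolic settings is a point the paper's proof passes over silently --- it too needs the two tail integrals to be individually controlled by their sum --- so flagging it and resolving it via positivity on the absolutely continuous subspace is a welcome refinement rather than a deviation.
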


\begin{proof}
  Fix $\epsilon > 0$ and take $\lambda _{0}$ such that 
  \begin{equation*}
    \int _{\infty}^{\lambda _{0}} \left|F \right|^{2}\ds \geq \norm[L^{2}(\reals)]{ F}^{2} - \epsilon .
  \end{equation*}
  Condition~\eqref{eq:cond1} allows us to take $T_{0}$ so that
  $E_{K}(\lambda_{0}, T)$ and $E_{P}(\lambda_{0}, T)$ are within
  $\epsilon$ of $\frac{1}{2}\int_{\infty}^{\lambda_{0}}\left|
    F\right|^{2}$ for all $T\geq T_{0}$.
  
  Let $E(t)= \int _{X}\left( e_{K}(t) + e_{P} (t)\right)\dg$.  By
  conservation of energy, $E(t) = E(0)$ for all $t$.
  Condition~\eqref{eq:cond2} implies that $E(t) =
  \norm[L^{2}(\reals)]{F}^{2}$ for
  all $t$.  We now estimate
  \begin{align*}
    \int _{s(T,x) > \lambda_{0}} \left( e_{K}(t) + e_{P}(t)\right)\dg &= E(T) - E_{K}(\lambda_{0},
    T) - E_{P}(\lambda_{0}, T) \\
    &= \norm[L^{2}(\reals)]{F}^{2} -
    E_{K}(\lambda_{0}, T) - E_{P}(\lambda_{0}, T) \\
    &\leq 3\epsilon 
  \end{align*}
   for all $T \geq T_{0}$.   Our choice of $T_{0}$ implies that
   $\left| E_{K}(\lambda_{0}, T) - E_{P}(\lambda_{0}, T)\right| \leq
   2\epsilon$ for all $T \geq T_{0}$.  Putting the two estimates
   together yields that $\left| E_{K} (T) - E_{P}(T)\right| \leq
   5\epsilon$ for all $T\geq T_{0}$, proving the claim.
\end{proof}

\begin{remark}
  \label{rem:local-energy-decay}
  The proof of Proposition~\ref{prop:equipartition} is essentially
  contained in the work of Friedlander~\cite{Friedlander:1980}, who
  observed that such solutions exhibit a non-quantitative form of
  local energy decay.  Indeed, if $\lambda$ and $T_{0}$ are large
  enough, then
  \begin{equation*}
    \int_{s(T,x)> \lambda} \left( e_{K}(t) + e_{P}(t)
    \right) \dg \leq 3\epsilon.
  \end{equation*}
  In particular, after waiting long enough, solutions decay in a
  forward light cone.  Moreover, the argument shows that quantitative
  decay rates for the function $F$ (i.e., for the radiation field
  below) yield quantitative local energy decay statements.
\end{remark}

\section{Applications}
\label{sec:applications}

In this section, we establish conditions~\eqref{eq:cond1} and
\eqref{eq:cond2} for radiation fields in various geometric settings.
In what follows, $X$ is always a compact manifold with boundary $Y$
and $x$ is always a boundary defining function, i.e., $x = 0$ at $Y = \pd
X$ and $dx\neq 0$ at $Y$.

\subsection{Scattering manifolds}
\label{sec:scattering-manifolds}

\begin{defn}
  \label{defn:scattering-mfld}
  A metric $g$ on $X$ is a scattering metric if it is a Riemannian
  metric on the interior of $X$ and, in a collar neighborhood
  $[0,\epsilon)_{x}\times Y_{y}$ of the boundary, $g$ has the form
  \begin{equation*}
    g = \frac{\dx^{2}}{x^{4}} + \frac{h(x,y,\dy)}{x^{2}},
  \end{equation*}
  where $h(x,y,\dy)$ is a smoothly varying (in $x$) family of
  Riemannian metrics on $Y$.
\end{defn}

Scattering metrics are a class of asymptotically conic metrics
introduced by Melrose \cite{Melrose:1994}.  In the case where $Y =
\sphere^{n-1}$ and $h(0,y,\dy)$ is the round metric on the sphere,
they are a class of asymptotically Euclidean metric (in this case
$x=r^{-1}$).

Friedlander~\cite{Friedlander:1980,Friedlander:2001} showed that the
radiation fields for solutions of the wave equation on scattering
manifolds exist and are unitary on the orthogonal complement of the
kernel of the radiation field.  S{\'a} Barreto \cite{Sa-Barreto:2003}
later proved a support theorem for the radiation field and showed that
Friedlander's unitarity theorem could be seen as a consequence of work
of Hassell and Vasy \cite{Hassell:1999}.

Let $u$ be a solution of equation~\eqref{eq:IVP} on a scattering
manifold and let $s_{\pm} (t,x) = t \mp \frac{1}{x}$.  The rescaling $v$,
given by $v_{\pm}(x,s,y) = x^{-\frac{n-1}{2}}u(s\pm \frac{1}{x},
x, y)$ is smooth down to $x=0$ and the radiation field is defined by
\begin{equation*}
  \mathcal{R}_{\pm}(\phi, \psi)(s, y) = \pd[s]v_{\pm}(0, s, y).
\end{equation*}

Consider now, for fixed $t$,
\begin{align*}
  E_{K}(\lambda, t) = \frac{1}{2}\int_{t - \frac{1}{x} \leq
    \lambda}\left| \pd[t]u\right|^{2} \frac{\dx\dh}{x^{n+1}} &=
  \frac{1}{2}\int_{s(t,x)\leq \lambda} \left|
    \pd[s]v(x,s(t,x), y)\right|^{2}\ds \dh .
\end{align*}
The function $v$ is smooth in $x$ and so the integrand converges
uniformly to $\pd[s]v(0,s,y)$ on $s\leq \lambda$ as $t\to \infty$.
The dominated convergence theorem then shows that the first part of
condition~\eqref{eq:cond1} holds for smooth compactly supported data
with $F(s) = \int_{Y}\left| \mathcal{R}_{+}(\phi,\psi)(s,y)\right|^{2}\dh$.
The continuity of the radiation field on $\espace$ shows that holds
for more general data.  A similar calculation shows that the condition
holds for $E_{P}(\lambda, t)$ as well.

The fact that the radiation field is an isometry in this setting shows
that condition~\eqref{eq:cond2} holds on $\espace$, proving the
theorem in the asymptotically Euclidean case.

\subsection{Asymptotically hyperbolic manifolds}
\label{sec:asympt-hyperb-manif}

\begin{defn}
  \label{defn:ah}
  $(X,g)$ is asymptotically hyperbolic if $\overline{g} = x^{2}g$ is a smooth (up to
  the boundary) Riemannian metric on $X$ and $|\dx|_{\overline{g}} =
  1$ at the boundary.
\end{defn}
Graham--Lee~\cite{GrahamLee1991} and Joshi--S{\'a} Barreto~\cite{Joshi:2000} showed that such metrics may
be put into a norma form so that, in a collar neighborhood of the
boundary,
\begin{equation*}
  g = \frac{\dx^{2} + h(x,y,\dy)}{x^{2}},
\end{equation*}
where $h$ is a smoothly varying family of Riemannian metrics on $Y$.
The spectrum of the Laplacian for such a metric was studied by Mazzeo
\cite{Mazzeo:1988,Mazzeo:1991a} and by Mazzeo and
Melrose~\cite{Mazzeo:1987} and consists of an absolutely continuous
spectrum $\sigma_{\operatorname{ac}}(\lap)$ and a finite pure point
spectrum $\sigma_{\operatorname{pp}}(\lap)$.  The spectrum satisfies
\begin{equation*}
  \sigma_{\pp}(\lap) \subset \left( 0 , \frac{(n-1)^{2}}{4}\right) ,
  \quad \sigma_{\ac}(\lap) = \left[ \frac{(n-1)^{2}}{4} , \infty\right) ,
\end{equation*}
giving a decomposition
\begin{equation*}
  L^{2}(X) = L^{2}_{\pp}(X) \oplus L^{2}_{\ac}(X).
\end{equation*}
If $\mathcal{P}_{\ac}: L^{2}(X) \to L^{2}_{\ac}(X)$ is the orthogonal
projector, we let $E_{\ac}(X) = \mathcal{P}_{\ac}(\espace)$.  

We define the operator $H = \lap_{g} - \frac{(n-1)^{2}}{4}$ so that
the bottom of the continuous spectrum moves to $0$.  With this choice
of $H$, the conserved energy is given by
\begin{equation*}
  E(t) = \frac{1}{2}\int _{X} \left( \left|\pd[t]u\right|^{2} + \left| \grad_{g}u\right|^{2} -
    \frac{(n-1)^{2}}{4}\left| u \right|^{2} \right)\dg, 
\end{equation*}
and the energy densities are given by
\begin{align*}
  e_{K} &= \frac{1}{2}\left| \pd[t]u\right|^{2}, \\
  e_{P} &= \frac{1}{2}\left(\left| \grad_{g}u\right|^{2} -
  \frac{(n-1)^{2}}{4}\left| u \right|^{2}\right).
\end{align*}
The conserved energy is positive definite if $u$ is in the image of
$\mathcal{P}_{\ac}$.

S{\'a} Barreto~\cite{Sa-Barreto:2005} showed the existence and
unitarity of the radiation field for initial data in $E_{\ac}$.
Indeed, let $s = t \mp \log x$ and, for a solution $u$ of
equation~\eqref{eq:IVP} (with $H = \lap_{g} - \frac{(n-1)^{2}}{4}$) on
an asymptotically hyperbolic manifold, define the functions
\begin{equation*}
  v_{\pm}(x,s,y) = x^{-n/2}u(s\mp \log x, x,y).
\end{equation*}
S{\'a} Barreto showed that for compactly supported smooth initial data
$v$ is smooth to $x=0$.  As before, the radiation field is given in
terms of $v$:
\begin{equation*}
  \mathcal{R}_{+}(\phi, \psi)(s, y) = \pd[s]v (0,s,y).
\end{equation*}
A similar computation to the one in
section~\ref{sec:scattering-manifolds} shows that
condition~\eqref{eq:cond1} is satisfied, again with $F(s) =
\int_{Y}\left| \mathcal{R}_{+}(s,y)\right|^{2}\dh$.  The unitarity of
the radiation field on $E_{\ac}$ shows that condition~\eqref{eq:cond2}
is satisfied as well, showing that equipartition of energy holds for
initial data in $E_{\ac}$.

One reason for projecting off of the pure point spectrum is that the
eigenfunctions here do not radiate any energy to null infinity.
Moreover, they correspond to zero-energy (and in fact exponentially
growing/decaying) solutions with respect to the energy form above.

\subsection{Asymptotically complex hyperbolic manifolds}
\label{sec:asympt-compl-hyperb}

Guillarmou and S{\'a} Barreto \cite{Guillarmou-SB:2008} showed the
existence and unitarity of the radiation field on asymptotically
complex hyperbolic manifolds.  We refer the reader to their paper for
the relevant definition of asymptotically complex hyperbolic
manifolds.  As with asymptotically real hyperbolic manifolds, the
spectrum splits into an absolutely continuous part and a pure point
part consisting of finitely many eigenvalues.  The radiation field for
equation~\eqref{eq:IVP} with $H= \lap_{g} -\frac{n^{2}}{4}$ is well
defined and unitary on $E_{\ac}$.  The conserved energy is given by
\begin{equation*}
  E (t) = \frac{1}{2}\int _{X}\left( \left| \pd[t]u\right|^{2} +
    \left| \grad_{g}u\right|^{2} - \frac{n^{2}}{4}\left| u\right|^{2}
  \right) \dg ,
\end{equation*}
and computations identical to those in
section~\ref{sec:asympt-hyperb-manif} show that equipartition of
energy holds in this setting.

\subsection{The Schwarzschild spacetime}
\label{sec:schwarzschild}

Recent work by Wang and the author \cite{Baskin-Wang} establishes the
existence and unitarity of the radiation fields on the Schwarzschild
black hole background.  Topologically, the Schwarzschild spacetime is
diffeomorphic to $\reals_{t}\times (2M,\infty)_{r}\times\sphere^{2}$
and is endowed with a Lorentzian metric
\begin{equation*}
  -\left( 1 - \frac{2M}{r}\right)\dt^{2} + \left(
    1-\frac{2M}{r}\right)^{-1}\dr^{2} + r^{2}\domega^{2}.
\end{equation*}
The spatial part of the spacetime has two ends corresponding to
infinity ($r=\infty$) and to the event horizon of the black hole
($r=2M$).  Wang and the author define the radiation field to consist
of two functions: the restriction of the solution $u$ to the event
horizon and the rescaled restriction of the solution to null infinity
and in the process use the pointwise decay of solutions of the wave
equation to show that it is unitary.  

One key feature of the Schwarzschild spacetime is that it is
\emph{static}, i.e., $\pd[t]$ is a Killing vector field that is
orthogonal to the constant $t$ hypersurfaces.  This implies that there
is a conserved energy and that it splits into kinetic and potential parts.

As $\pd[t]$ is a Killing vector field for $r > 2M$, the conserved
energy is given by
\begin{equation*}
  E(t) = \frac{1}{2}\int _{2M}^{\infty}\int_{\sphere^{2}} \left(
    \left( 1 - \frac{2M}{r}\right)^{-1}\left| \pd[t]u\right|^{2} +
    \left( 1 - \frac{2M}{r}\right)\left| \pd[r]u\right|^{2} +
    \frac{1}{r^{2}}\left| \grad_{\omega}u \right|^{2}
  \right) r^{2} \domega \dr, 
\end{equation*}
and so the kinetic and potential energy densities are given by
\begin{align*}
  e_{K} &= \frac{1}{2}\left( 1 - \frac{2M}{r}\right)^{-1}\left|
    \pd[t]u\right|^{2}, \\
  e_{P} &= \frac{1}{2}\left( 1 - \frac{2M}{r}\right) \left| \pd[r]u\right|^{2} +
  \frac{1}{2r^{2}}\left| \grad_{\omega}\right|^{2}.
\end{align*}

Near the event horizon, in coordinates $(\rho = r-2M, \tau = t + r +
2M \log (r-2M))$, the solution is smooth down to $\rho = 0$ and one
half of the radiation field is defined to be
\begin{equation*}
  f_{\mathcal{H}} = 2M\pd[\tau] u |_{\rho = 0}.
\end{equation*}
Near null infinity, in coordinates $(\rho = r^{-1}, \tau = t - r - 2M
\log(r-2M))$, the solution is again smooth down to $\rho = 0$ and the
other half of the radiation field is defined to be
\begin{equation*}
  f_{\mathcal{I}} = \pd[\tau](\rho^{-1} u )|_{\rho = 0}.
\end{equation*}
For finite-energy (with respect to the energy form above) initial
data, the radiation field is unitary, i.e.,
\begin{equation*}
  E(0) = \norm[L^{2}(\reals \times
  \sphere^{2})]{f_{\mathcal{H}}}^{2} +
  \norm[L^{2}(\reals\times\sphere^{2})]{f_{\mathcal{I}}}^{2}. 
\end{equation*}
Proposition~\ref{prop:equipartition} then implies that finite-energy
solutions on the Schwarzschild exterior exhibit equipartition of
energy (on the constant $t$ hypersurfaces).

\subsection{The energy-critical semilinear wave equation on $\reals^3$}
\label{sec:energy-crit-semil}

In recent work \cite{BaskinBarreto2012}, S{\'a} Barreto and the
author define a nonlinear radiation field for the energy-critical
semilinear wave equation on $\reals^{3}$.  Indeed, consider the
following semilinear wave equation on $\reals^{3}$:
\begin{align}
  \label{eq:SLW}
  \pd[t]^{2} - \lap u + |u|^{4}u &= 0, \\
  (u,\pd[t]u)|_{t=0} &= (\phi, \psi). \notag
\end{align}
The energy densities are given by
\begin{align*}
  e_{P} &= \frac{1}{2}\left| \grad u \right| ^{2} + \frac{1}{6}\left|
    u\right|^{2}, \\
  e_{K} &= \frac{1}{2}\left| \pd[t]u\right|^{2},
\end{align*}
and the total energy,
\begin{equation*}
  E = \frac{1}{2}\int _{\reals^{3}}\left( \left| \pd[t]u\right|^{2} +
    \left| \grad u \right| ^{2} \right) \dz + \frac{1}{6} \int
  _{\reals^{3}} \left| u \right|^{6} \dz ,
\end{equation*}
is conserved.

Grillakis \cite{Grillakis:1990} proved global existence and regularity
for solutions of this equation with compactly supported smooth initial
data, while Shatah and Struwe \cite{Shatah:1994} later extended the
global well-posedness result to initial data in $\dot{H}^{1}\times
L^{2}$.  Bahouri and Shatah \cite{Bahouri:1998} showed that the
nonlinear ($u^{6}$) term in the energy tends to $0$ as $t\to\infty$.
Bahouri and G{\'e}rard \cite{Bahouri:1999} extended this result to
show that solutions also exhibit scattering in the energy space.  

The nonlinear radiation field $\mathcal{L}_{+}(\phi, \psi)$ associated
to initial data $(\phi, \psi)$ with finite energy is given just as in
the linear case.  In particular, if $u$ solves equation~\eqref{eq:SLW}
then the limit
\begin{equation*}
  \mathcal{L}_{+}(\phi, \psi)(s,\theta) = \lim_{r\to \infty}\pd[s] u
  (s + r, r\theta)
\end{equation*}
exists because $u \in L^{5}([0,\infty), L^{10}(\reals^{3}))$.
Moreover, the map $(\phi, \psi)\mapsto \mathcal{L}_{+}(\phi, \psi)$ is
norm-preserving in the sense that
\begin{equation*}
  \norm[L^{2}(\reals \times \sphere^{2})]{\mathcal{L}_{+}(\phi, \psi)}^{2}
  = \frac{1}{2}\norm[L^{2}(\reals^{3})]{\grad \phi}^{2} +
  \frac{1}{2} \norm[L^{2}(\reals^{3})]{\psi}^{2} +
  \frac{1}{6}\int_{\reals^{3}}\left| \phi\right|^{6}.
\end{equation*}
The injectivity of the map $(\phi, \psi) \mapsto \mathcal{L}_{+}(\phi,
\psi)$, together with the definition of $\mathcal{L}_{+}(\phi, \psi)$,
shows that the radiation field satisfies properties~\eqref{eq:cond1}
and \eqref{eq:cond2}.  We may thus conclude that the semilinear wave
equation exhibits asymptotic equipartition of energy.  Moreover,
together with the Bahouri and Shatah \cite{Bahouri:1998} result, we
conclude that the critical semilinear wave exhibits asymptotic
equipartition of energy in a more traditional sense, i.e.,
\begin{equation*}
  \lim_{T\to \infty} \int_{\reals^{3}}\left( \left| \pd[t]u(T)\right|^{2} -
  \left| \grad u (T)\right|^{2}\right) \dz = 0.
\end{equation*}
This provides an alternative proof of a result similar to the
integrated equipartition result of
Vega--Visciglia~\cite{VegaVisciglia2008}.

\bibliographystyle{alpha}
\bibliography{papers}

\end{document}